\newtheorem{theorem}{Theorem}[section]
\newtheorem{lemma}[theorem]{Lemma}
\title{Some Turing-Complete Extensions of First-Order Logic}
\author{Antti Kuusisto\thanks{This work was carried out during a tenure of the
ERCIM ``Alain
Bensoussan" Fellowship Programme.
The reported research has received funding from the
European Union
Seventh Framework
Programme (FP7/2007-2013) under
grant agreement
number 246016; the author has
been employed as Marie Curie Fellow during the research period.}
\institute{Institute of Computer Science\\ University of Wroc\l aw\\ Poland}
\email{antti.j.kuusisto@uta.fi}
}
\begin{document}
\maketitle
\begin{abstract}
We introduce a natural Turing-complete extension of first-order logic $\mathrm{FO}$.
The extension adds two novel features to $\mathrm{FO}$.
The first one of these is the capacity to \emph{add new points} to models
and \emph{new tuples} to relations. The second one is the possibility of
\emph{recursive looping} when a formula is evaluated using a semantic game.
We first define a game-theoretic semantics for the logic and
then prove that the expressive power of the logic
corresponds in a canonical way to the recognition
capacity of Turing machines.
Finally, we show how to incorporate generalized quantifiers into the logic
and argue for a highly natural connection between 
oracles and generalized quantifiers.
\end{abstract}

\section{Introduction}
We introduce a natural Turing-complete extension of
first-order logic $\mathrm{FO}$.
This extension essentially
adds two features to basic $\mathrm{FO}$.
The first one of these is the capacity to \emph{add new points} to models
and \emph{new tuples} to relations. The second one is the possibility of
\emph{looping} when a formula is evaluated using a semantic game.
Logics with different kinds of recursive looping capacities have been widely
studied in the context of finite model theory \cite{libkin}. Typically
such logics are fragments of second-order predicate logic.
A crucial weakness in the expressivity of $k$-th order predicate
logic is that only a finite amount of information can be encoded by a finite number of
quantified relations over a finite domain.
Intuitively, there is \emph{no infinitely expandable memory} available.
Thus $k$-th order logic is not Turing-complete.
To overcome this limitation, we add to first-order logic operators that
enable the addition of new elements to the domains of models and
new tuples to relations.
Coupling this feature with the possibility of recursive looping leads to a
very natural Turing-complete extension of first-order logic.
In addition to operators that enable the extension of domains and relations,
we also consider an operator that enables the deletion of tuples of relations.
It would be natural to also include to our framework an operator that enables the deletion of
domain points. This indeed could (and perhaps should) be done, but for purely
technical reasons, we ignore this possibility.
We provide a game-theoretic semantics for the novel logic.
A nice feature of the logic\hspace{0.4mm}---\hspace{0.4mm}let us simply call it $\mathcal{L}$,
or \emph{logic} $\mathcal{L}$---\hspace{0.4mm}is that it
\emph{simulates halting as well as diverging} computations
of Turing machines in a very natural way.
This behavioural correspondence between Turing machines
and the logic $\mathcal{L}$ stems from the
appropriate use of game-theoretic concepts.
Let us have a closer look at this matter.
Let $\mathfrak{A}$ be a model and
$\varphi$ a formula of first-order logic. Let $f$ be 
an \emph{assignment function} that interprets the
free variables of $\varphi$ in the domain $A$
of $\mathfrak{A}$. The semantic game $G(\mathfrak{A},f,\varphi)$
is played between the two players $\exists$ and $\forall$
in the usual way (see, e.g., \cite{hintikka22, mann}).
If the verifying player $\exists$ has a
winning strategy in the game $G(\mathfrak{A},f,\varphi)$,
we write $\mathfrak{A},f\models^+\varphi$
and say that $\varphi$ is \emph{true} in $(\mathfrak{A},f)$.
If, on the other hand, the falsifying player $\forall$ 
has a winning strategy, we write $\mathfrak{A},f\models^-\varphi$
and say that $\varphi$ is \emph{false} in $(\mathfrak{A},f)$.
Since $\varphi$ is a first-order formula, we have $\mathfrak{A},f\models^+\varphi$
iff it is not the case that $\mathfrak{A},f\models^-\varphi$.
If $\varphi$ is a formula of \emph{IF logic} \cite{hintikka} or \emph{dependence logic}
\cite{vaananen}\hspace{0.4mm},
for example, the situation changes. It is then possible that \emph{neither player
has a winning strategy} in the semantic game. This results in a third truth value (\emph{indeterminate}).
Turing machines of course exhibit analogous behaviour: on an input word $w$, a
Turing machine can halt in an accepting state, halt in a rejecting state, or diverge.
The logic $\mathcal{L}$ incorporates each of these three options
in its semantics in a canonical way.
For each Turing machine $\mathrm{TM}$, there exists a sentence of
$\varphi_{\mathrm{TM}}$ such that $\mathrm{TM}$ \emph{accepts} 
the encoding of a \emph{finite} model $\mathfrak{A}$ iff $\mathfrak{A},f\models^+\varphi_{\mathrm{TM}}$,
and furthermore, $\mathrm{TM}$ \emph{rejects} the encoding of $\mathfrak{A}$
iff $\mathfrak{A},f\models^-\varphi_{\mathrm{TM}}$.
Therefore $\mathrm{TM}$ \emph{diverges} on the encoding of $\mathfrak{A}$
iff neither the verifying nor the falsifying player has a winning strategy
in the game invoving $\mathfrak{A}$, $f$ and $\varphi$.
For the converse, for each formula $\chi$ of the logic $\mathcal{L}$, there exists a
Turing machine $\mathrm{TM}_{\chi}$ such that a similar full
symmetry exists between semantic games involving $\chi$ and
computations involving $\mathrm{TM}_{\chi}$.
By Turing-completeness of a logic we mean exactly
this kind of a behavioral equivalence between Turing machines and formulae.
The moves in the semantic games for $\mathcal{L}$
are exactly as in first-order logic in positions involving
the first-order operators $\exists x$, $\vee$, $\neg$.
In positions of the type $Ix\, \varphi$, a
fresh point is inserted into the domain of the model investigated,
and the variable $x$ is interpreted to refer to the fresh point.
There are similar operators for the insertion (deletion) of tuples into (from) relations.
The recursive looping is facilitated by operators
such as the ones in the formula $1\bigl(P(x)\vee\, 1)$,
where the player ending up in a position involving the
novel atomic formula $1$ can \emph{jump back} into a
position involving $1\bigl(P(x)\vee\, 1\bigr)$.
Semantic games are played for at most omega
rounds\hspace{0.6mm}\footnote{\emph{Omega} of course refers to the
smallest infinite ordinal.} and can be won only
by moving to a position involving a first-order atomic
formula. Winning and losing in positions involving 
first-order atoms is determined exactly as in first-order logic.
Operators that bear a resemblance to the ones
used in the logic $\mathcal{L}$ have of 
course been considered in logical
contexts before. Lauri Hella has suggested (personal communication)
extending first-order logic with recursive looping constructors
that resemble those investigated in this article. His idea involves studying fixed point logics
using a game-theoretic semantics with somewhat different
kinds of winning conditions than the ones we shall formulate below.
The framework does not involve modifying the domains of structures.
The insertion
(deletion) of tuples to (from) relations is
an important ingredient in dynamic
complexity (see, e.g., \cite{gra, imme}),
although motivated and used there in a way that is
quite different from the approach in this article.
Logics that involve jumping into different model
domains include for example 
the \emph{sort logic} of 
V\"{a}\"{a}n\"{a}nen (\cite{vaah},\cite{vaa}), a logic which
can in a sense be regarded as the strongest
possible model theoretic logic.
Other systems with similar constructors or motivations to those
considered in this article include for example BGS logic 
\cite{bgs},
\begin{footnotesize}
WHILE
\end{footnotesize}
languages \cite{abiteb}
and abstract state machines
\cite{guuure1, borger}.
See also the articles \cite{chandra} and \cite{japaridze}.
The reason we believe that the logic $\mathcal{L}$ is
particularly interesting lies in its simplicity and
\emph{exact behavioural
correspondence} with Turing machines on one hand,
and in the fact that it provides a 
\emph{canonical unified perspective} on logic and
computation on the other hand.
The logic $\mathcal{L}$ canonically
extends ordinary first-order logic
to a Turing-complete framework, and thereby serves not
only as a novel logic, but also as a novel model of computation.
It is also worth noting that the fresh operators of $\mathcal{L}$ nicely
capture two classes of constructors that are omnipresent in
the practise of mathematics: 
scenarios where fresh points are added to investigated constructions
(or fresh lines are drawn, etc.) play a central role in geometry, and
recursive looping operators are found everywhere in mathematical practise,
often indicated with the help of the famous three dots (...).

The structure of the paper is as follows.
In Section \ref{two} we define some
preliminary notions and give a formal
account of the syntax $\mathcal{L}$.
In Section \ref{sectionthree} we develop
the semantics of $\mathcal{L}$.
In Section \ref{four} we establish the Turing-completeness
of $\mathcal{L}$ in restriction to the class of word models.
In Section \ref{five} we use the results of
Section \ref{four} in order to establish Turing-completeness
of $\mathcal{L}$ in the class of all finite models.
In Section \ref{six} we show how to extend $\mathcal{L}$
with generalized quantifiers. We also briefly discuss the conceptiual
link between oracles and generalized quantifiers.
\section{Preliminaries}\label{two}
Let $\mathbb{Z}_+$ denote the set of positive integers.
Let $\mathrm{VAR}\, :=\, \{\, v_i\ |\ i\in\mathbb{Z}_+\ \}$
be the set of variable symbols used in first-order logic.
We mainly use \emph{metavariables} $x,y,z,x_i,y_i,z_i,$ etc., in order
to refer to the variables in $\mathrm{VAR}$.
Let $k\in\mathbb{Z}_+$. We let $\mathrm{VAR}_{\mathrm{SO}}(k)$
be a countably infinite set of $k$-ary relation variables. 
We let $\mathrm{VAR}_{\mathrm{SO}} = \bigcup_{k\in\mathbb{Z}_+}\mathrm{VAR}_{\mathrm{SO}}(k)$.
Let $\tau$ denote a complete relational vocabulary, i.e.,
$\tau$ is the union $\bigcup_{k\in\mathbb{Z}_+}\tau_k$, where
$\tau_k$ is a countably infinite set of $k$-ary relation symbols.
Let $\sigma\subseteq\tau$.
Define the language $\mathcal{L}^*(\sigma)$ to be the smallest set $S$ such
that the following conditions are satisfied.
\begin{enumerate}
\item
If $x_1,...,x_k$ are variable symbols and $R\in\sigma$ a 
$k$-ary relation symbol, then $R(x_1,...,x_k)\in S$.
\item
If $x_1,...,x_k$ are variable symbols and $X\in\mathrm{VAR}_{\mathrm{SO}}(k)$ a 
$k$-ary relation variable, then $X(x_1,...,x_k)\in S$.
\item
If $x,y$ are variable symbols, then $x = y\ \in\ S$.
\item
If $k\in\mathbb{N}$ is (a symbol representing) a natural number,
then $k\in S$.
\item
If $\varphi\in S$, then $\neg\varphi\in S$.
\item
If $\varphi,\psi\in S$, then $(\varphi\wedge\psi)\in S$.
\item
If $x$ is a variable symbol and $\varphi\in S$,
then $\exists x\, \varphi\ \in\ S$.
\item
If $x$ is a variable symbol and $\varphi\in S$,
then $Ix\, \varphi\ \in\ S$.
%
%
%
\item
If $x_1,...,x_k$ are variable symbols, $R\in\sigma$ a $k$-ary relation
symbol and $\varphi\in S$,
then $I_{Rx_1,...,x_k}\varphi\ \in\ S$.
\item
If $x_1,...,x_k$ are variable symbols, $X\in\mathrm{VAR}_{\mathrm{SO}}$ a $k$-ary relation variable
symbol and $\varphi\in S$,
then $I_{Xx_1,...,x_k}\varphi\ \in\ S$.
\item
If $x_1,...,x_k$ are variable symbols, $R\in\sigma$ a $k$-ary relation
symbol and $\varphi\in S$,
then $D_{Rx_1,...,x_k}\varphi\ \in\ S$.
\item
If $x_1,...,x_k$ are variable symbols, $X\in\mathrm{VAR}_{\mathrm{SO}}$ a $k$-ary
relation variable symbol and $\varphi\in S$,
then $D_{Xx_1,...,x_k}\varphi\ \in\ S$.
\item
If $\varphi\in S$ and $k\in\mathbb{N}$, then $k\varphi\in S$.
\end{enumerate}
While we could develop a sensible semantics for the language $\mathcal{L}^*(\sigma)$,
we shall only consider a sublanguage $\mathcal{L}(\sigma)\, \subseteq\, \mathcal{L}^*(\sigma)$
that avoids certain undesirable situations in semantic games.
Let $\varphi\in\mathcal{L}^*(\sigma)$ be a formula. Assume that 
$\varphi$ contains an atomic subformula $k\in\mathbb{N}$ and
another subformula $k\, \psi$. Assume that $k$ is \emph{not} a subformula of $k\, \psi$.
Then we say that $\varphi$ has a \emph{non-standard jump}.
Note that we define that \emph{every instance} of the syntactically
same subformula of $\varphi$ is a
\emph{distinct} subformula: for example, the formula
$(P(x)\wedge P(x))$ is considered to have \emph{three}
subformulae, these being the left and right instances of $P(x)$
and the formula $(P(x)\wedge P(x))$ itself. 
Thus for example the formula
$\bigl(\, k(P(x) \wedge k)\ \wedge\ k\, (P(x)\, \wedge\, k)\, \bigr)$
has a non-standard jump.
We define $\mathcal{L}(\sigma)$ to be the largest subset of $\mathcal{L}^*(\sigma)$
that does not contain formulae with non-standard jumps.
The reason we wish to avoid non-standard jumps is simple and
will become entirely clear when we define the semantics of $\mathcal{L}(\sigma)$
in Section \ref{sectionthree}\hspace{0.4mm}. Let us consider an example that demonstrates
the undesirable situation. Consider the formula $\bigl(k\, \wedge\, \exists x\, k\, P(x)\bigr)$
of $\mathcal{L}^*(\sigma)$. As will become clear in Section \ref{sectionthree}\hspace{0.4mm}, it is possible to end up
in the related semantic game in a position involving the atomic formula $P(x)$ \emph{without} first
visiting a position involving the formula $\exists x\, k\, P(x)$. This is
undesirable, since a related \emph{variable assignment function} will then not necessarily give
any value to the variable $x$.
For this reason we limit attention to the fragment $\mathcal{L}(\sigma)$
containing only formulae without non-standard jumps.
%
%
%

%
Before defining the semantics of the language $\mathcal{L}(\sigma)$, we
make a number of auxiliary definitions. Let $\mathfrak{A}$, $\mathfrak{B}$, etc.,  be models.
We let $A$, $B$, etc.,  denote the domains of the models in the usual way.
A function $f$ that interprets a
finite subset of $\mathrm{VAR}\cup\mathrm{VAR}_{\mathrm{SO}}$
in the domain of a model $\mathfrak{A}$ is called an \emph{assignment}.
Naturally,  if $X\in\mathrm{VAR}_{\mathrm{SO}}\cap\mathit{Dom}(f)$ is a $k$-ary
relation variable, then $f(X) \subseteq A^k$, and if $x\in\mathrm{VAR}\cap\mathit{Dom}(f)$, then $f(x)\in A$.
We let $f[x\mapsto a]$ denote the valuation with the domain $\mathit{Dom}(f)\cup\{x\}$
defined such that
$f[x\mapsto a](x) = a$
and
$f[x\mapsto a](y) = f(y)$ if $y\not= x$.
%
%
%
%
%
%
%
We analogously define $f[X\mapsto S]$, where $X\in\mathrm{VAR}_{\mathrm{SO}}$ is a $k$-ary
relation variable and $S\subseteq A^k$. We will also construct valuations of, say, the type
$f[x\mapsto a, y\mapsto b, X\mapsto S]$. The interpretation of these constructions is clear.
We define the set of free variables $\mathit{free}(\varphi)$ of a formula $\varphi\in\mathcal{L}(\sigma)$
as follows.
\begin{enumerate}
\item
If $R\in\sigma$, then $\mathit{free}(R(x_1,...,x_k)) = \{x_1,...,x_k\}$.
\item
If $X\in\mathrm{VAR}_{\mathrm{SO}}(k)$, then $\mathit{free}(X(x_1,...,x_k)) = \{X\}\cup\{x_1,...,x_k\}$.
\item
$\mathit{free}(x=y) = \{x,y\}$.
\item
$\mathit{free}(k) = \emptyset$.
\item
$\mathit{free}(\neg\varphi) = \mathit{free}(\varphi)$.
\item
$\mathit{free}((\varphi\wedge\psi)) = \mathit{free}(\varphi)\cup\mathit{free}(\psi)$.
\item
$\mathit{free}(\exists x\varphi) = \mathit{free}(\varphi)\setminus\{x\}$.
\item
$\mathit{free}(I x\, \varphi) = \mathit{free}(\varphi)\setminus\{x\}$.
%
%
\item
$\mathit{free}(I_{Rx_1,...,x_k}\varphi) = \mathit{free}(\varphi)\setminus\{x_1,...,x_k\}$.
\item
$\mathit{free}(I_{Xx_1,...,x_k}\varphi) = \mathit{free}(\varphi)\setminus\{X,x_1,...,x_k\}$.
\item
$\mathit{free}(D_{Rx_1,...,x_k}\varphi) = \mathit{free}(\varphi)\setminus\{x_1,...,x_k\}$.
\item
$\mathit{free}(D_{Xx_1,...,x_k}\varphi) = \mathit{free}(\varphi)\setminus\{X,x_1,...,x_k\}$.
\item
$\mathit{free}(k\varphi) = \mathit{free}(\varphi)$.
\end{enumerate}
A formula $\varphi$ of $\mathcal{L}(\sigma)$ is a
\emph{sentence} if $\mathit{free}(\varphi) = \emptyset$.
%

%
%

%
\section{A Semantics for $\mathcal{L}(\sigma)$}\label{sectionthree}
In this section we define a game-theoretic semantics for the language $\mathcal{L}(\sigma)$.
The semantics extends the well-known game-theoretic semantics of first-order logic
(see, e.g., \cite{mann}).
The semantic games are played by two players $\exists$ and $\forall$.
Let $\varphi$ be a formula of $\mathcal{L}(\sigma)$.
Let $\mathfrak{A}$ be a $\sigma$-model, and let
$f$ be an assignment that interprets the free variables of $\varphi$ in $A$.
Let $\#\in\{+,-\}$ be simply a symbol.
The quadruple $(\mathfrak{A},f,\#,\varphi)$ defines a semantic game $G(\mathfrak{A},f,\#,\varphi)$.
The set of \emph{positions} in the game $G(\mathfrak{A},f,\#,\varphi)$ is the smallest set $S$
such that the following conditions hold.
\begin{enumerate}
\item
$(\mathfrak{A},f,\#,\varphi)\in S$.
\item
If $(\mathfrak{B},g,\#',\neg\psi)\in S$,
then $(\mathfrak{B},g,\#'',\psi)\in S$,
where $\#''\  \in\ \{+,-\}\setminus\{\#'\}$.
\item
If $(\mathfrak{B},g,\#',(\psi\wedge\psi'))\in S$,
then $(\mathfrak{B},g,\#',\psi)\in S$ and $(\mathfrak{B},g,\#',\psi')\in S$.
\item
If $(\mathfrak{B},g,\#',\exists x\psi)\in S$ and $a\in B$,
then $(\mathfrak{B},g[x\mapsto a],\#',\psi)\in S$.
\item
If $(\mathfrak{B},g,\#',I x\, \psi)\in S$ and $b\not\in B$ is a
fresh element\hspace{0.4mm}\footnote{To avoid introducing a proper class of new positions
here, we assume $b = B$. Since $B\not\in B$, the 
element $b = B$ is a fresh element.
Only a single new position is generated.}\hspace{0.4mm},
then $(\mathfrak{B}\cup\{b\},g[x\mapsto b],\#',\psi)\in S$;
we define $\mathfrak{B}\cup\{b\}$ to be the $\sigma$-model $\mathfrak{C}$ where
$b$ is simply a fresh isolated point, i.e., the domain of $\mathfrak{C}$ is $B\cup\{b\}$,
and $R^{\mathfrak{C}} = R^{\mathfrak{B}}$ for each $R\in\sigma$.
\item
If $(\mathfrak{B},g,\#',I_{Rx_1,...,x_k}\psi)\in S$
and $b_1,...,b_k\in B$, then $(\mathfrak{B}^*,g^*,\#',\psi)\in S$,
where $\mathfrak{B}^*$ is obtained from $\mathfrak{B}$ by defining
$R^{\mathfrak{B}^*} := R^{\mathfrak{B}}\cup\{(b_1,...,b_k)\}$,
and $g^* := g[x_1\mapsto b_1,...,x_k\mapsto b_k]$.
For each relation symbol $P\in\sigma\setminus\{R\}$,
we have $P^{\mathfrak{B}^*} := P^{\mathfrak{B}}$.
The models $\mathfrak{B}$ and $\mathfrak{B}^*$
have the same domain.
\item
Assume $(\mathfrak{B},g,\#',I_{Xx_1,...,x_k}\psi)\in S$ and
$b_1,...,b_k\in B$. If $X\in\mathit{Dom}(g)$,
call $C := g(X)$. Otherwise let $C := \emptyset$.
Then $(\mathfrak{B},g^*,\#',\psi)\in S$,
where $g^* := g[x_1\mapsto b_1,...,x_k\mapsto b_k, X\mapsto (C\cup\{(b_1,...,b_k)\})]$.
\item 
If $(\mathfrak{B},g,\#',D_{Rx_1,...,x_k}\psi)\in S$
and $b_1,...,b_k\in B$, then $(\mathfrak{B}^*,g^*,\#',\psi)\in S$,
where $\mathfrak{B}^*$ is obtained from $\mathfrak{B}$ by defining
$R^{\mathfrak{B}^*} := R^{\mathfrak{B}}\setminus\{(b_1,...,b_k)\}$,
and $g^* := g[x_1\mapsto b_1,...,x_k\mapsto b_k]$.
For each relation symbol $P\in\sigma\setminus\{R\}$,
we have $P^{\mathfrak{B}^*} := P^{\mathfrak{B}}$.
The models $\mathfrak{B}$ and $\mathfrak{B}^*$
have the same domain.
\item
Assume $(\mathfrak{B},g,\#',D_{Xx_1,...,x_k}\psi)\in S$ and
$b_1,...,b_k\in B$. If $X\in\mathit{Dom}(g)$,
call $C := g(X)$. If
$X\not\in\mathit{Dom}(g)$,
define $C := \emptyset$.
Then $(\mathfrak{B},g^*,\#',\psi)\in S$,
where $g^* := g[x_1\mapsto b_1,...,x_k\mapsto b_k, X\mapsto (C\setminus\{(b_1,...,b_k)\})]$.
\item
If $(\mathfrak{B},g,\#',k\psi)\in S$,
then $(\mathfrak{B},g,\#',\psi)\in S$. 
\end{enumerate}
The game $G(\mathfrak{A},f,\#,\varphi)$ is played as follows.
\begin{enumerate}
\item
Every \emph{play} of the game begins from the position $(\mathfrak{A},f,\#,\varphi)$.
\item
If a position $(\mathfrak{B},g,\#',\neg\psi)$ is reached in a play of the game,
the play continues from the position $(\mathfrak{B},g,\#'',\psi)$,
where $\#''\in\{+,-\}\setminus\{\#'\}$.
\item
If a position $(\mathfrak{B},g,\#',(\psi\wedge\psi'))$ is reached,
then the play continues as follows.
If $\#' = +$ (respectively, $\#' = -$),
then the player $\forall$ (respectively, $\exists$) picks a formula $\chi\in\{\psi,\psi'\}$,
and the play continues from the position $(\mathfrak{B},g,\#',\chi)$.
\item
If a position $(\mathfrak{B},g,\#',\exists x\psi)$ is reached,
then the play continues as follows.
If $\#' = +$ (respectively, $\#' = -$),
then the player $\exists$ (respectively, $\forall$) picks an element $b\in B$,
and the play continues from the position $(\mathfrak{B},g[x\mapsto b],\#',\psi)$.
\item
If a position $(\mathfrak{B},g,\#',I x\, \psi)$ is reached,
then the play continues from the position $(\mathfrak{B}\cup\{b\},g[x\mapsto b],\#',\psi)$,
where $\mathfrak{B}\cup\{b\}$ is the $\sigma$-model $\mathfrak{C}$, where
$b$ is simply a fresh isolated point\footnote{Recall that we let $b := B$ in order to avoid
proper classes of new positions.}\hspace{0.4mm}, i.e., the domain of $\mathfrak{C}$ is $B\cup\{b\}$,
and $R^{\mathfrak{C}} = R^{\mathfrak{B}}$ for each $R\in\sigma$.
\item
Assume a position $(\mathfrak{B},g,\#',I_{Rx_1,...,x_k}\psi)$ has been
reached. The play of the game continues as follows.
If $\#' = +$ (respectively, $\#' = -$), then the player $\exists$ (respectively, $\forall$)
chooses a tuple $(b_1,...,b_k)\in B^k$.
The play of the game continues from the position $(\mathfrak{B}^*,g^*,\#',\psi)$,
where $\mathfrak{B}^*$ is obtained from $\mathfrak{B}$ by redefining
$R^{\mathfrak{B}^*} := R^{\mathfrak{B}}\cup\{(b_1,...,b_k)\}$,
and $g^* := g[x_1\mapsto b_1,...,x_k\mapsto b_k]$.
Other relations and the domain remain unaltered.
\item
Assume a position $(\mathfrak{B},g,\#',I_{Xx_1,...,x_k}\psi)$ has been
reached. The play of the game continues as follows.
If $\#' = +$ (respectively, $\#' = -$), then the player $\exists$ (respectively, $\forall$)
chooses a tuple $(b_1,...,b_k)\in B^k$.
The play of the game continues from the position $(\mathfrak{B},g^*,\#',\psi)$,
where $g^* := g[x_1\mapsto b_1,...,x_k\mapsto b_k, X\mapsto (C\cup\{(b_1,...,b_k)\})]$;
here $C = g(X)$ if $X\in\mathit{Dom}(g)$, and otherwise $C = \emptyset$.
\item
Assume a position $(\mathfrak{B},g,\#',D_{Rx_1,...,x_k}\psi)$ has been
reached. The play of the game continues as follows.
If $\#' = +$ (respectively, $\#' = -$), then the player $\exists$ (respectively, $\forall$)
chooses a tuple $(b_1,...,b_k)\in B^k$.
The play of the game continues from the position $(\mathfrak{B}^*,g^*,\#',\psi)$,
where $\mathfrak{B}^*$ is obtained from $\mathfrak{B}$ by redefining
$R^{\mathfrak{B}^*} := R^{\mathfrak{B}}\setminus\{(b_1,...,b_k)\}$,
and $g^* := g[x_1\mapsto b_1,...,x_k\mapsto b_k]$.
Other relations and the domain remain unaltered.
\item
Assume a position $(\mathfrak{B},g,\#',D_{Xx_1,...,x_k}\psi)$ has been
reached. The play of the game continues as follows.
If $\#' = +$ (respectively, $\#' = -$), then the player $\exists$ (respectively, $\forall$)
chooses a tuple $(b_1,...,b_k)\in B^k$.
If $X\in\mathit{Dom}(g)$, call $C:=g(X)$.
Otherwise define $C := \emptyset$.
The play of the game continues from the position $(\mathfrak{B},g^*,\#',\psi)$,
where $g^* := g[x_1\mapsto b_1,...,x_k\mapsto b_k, X\mapsto (C\setminus\{(b_1,...,b_k)\})]$.
\item
If a position $(\mathfrak{B},g,\#',k\psi)$ is reached,
then the play of the game continues
from the position $(\mathfrak{B},g,\#',\psi)$.
\item
If a position $(\mathfrak{B},g,\#',k)$ is reached,
then the play of the game continues as follows.
If $\#' = +$ (respectively, $\#' = -$)
and there exists a subformula
$k\psi$ of the original formula $\varphi$,
then the player $\exists$ (respectively, $\forall$) chooses some
subformula $k\chi$ of $\varphi$, and 
the play continues from the position
$(\mathfrak{B},g,\#',k\chi)$.
If no subformula $k\psi$ exists,
the play of the game ends.
\item
If $\psi$ is an atomic formula $R(x_1,...,x_k)$,
$X(x_1,...,x_k)$ or $x = y$, and a position $(\mathfrak{B},g,\#',\psi)$ is reached,
then the play of the game ends.
\end{enumerate}
A play of the game $G(\mathfrak{A},f,\#,\varphi)$ is played up to omega
rounds. If a play of the game continues for omega rounds, then neither of the two players wins the play.
If a play of the game ends after a finite number of rounds, then one
of the players wins the play. The winner is determined as follows.
\begin{enumerate}
\item
If the play ends in a position $(\mathfrak{B},g,\#',k)$, which may happen
in the pathological case where there are no subformulae of $\varphi$ of the type $k\psi$, then
$\exists$ wins if $\#' = -$ and $\forall$ wins if $\#' = +$.
\item
If the play ends in a position $(\mathfrak{B},g,\#',\psi)$, where
$\psi$ is an atomic formula $R(x_1,...,x_k)$, $X(x_1,...x_{k})$ or $x=y$,
then the winner of the play is determined as follows.
\begin{enumerate}
\item
Assume $\#' = +$. Then $\exists$ wins if
$\mathfrak{B},g\models\psi$. If $\mathfrak{B},g\not\models\psi$,
then $\forall$ wins. Here $\models$ is the semantic
turnstile of ordinary first-order logic.
\item
Assume $\#' = -$. Then $\forall$ wins if
$\mathfrak{B},g\models\psi$. If $\mathfrak{B},g\not\models\psi$,
then $\exists$ wins.
\end{enumerate}
\end{enumerate}
A \emph{strategy} of
$\exists$ in the game $G(\mathfrak{A},f,\#,\varphi)$ is
simply a function that determines a unique choice for the player $\exists$
in every position of the game $G(\mathfrak{A},f,\#,\varphi)$ that
requires $\exists$ to make a choice. A strategy of $\forall$ is
defined analogously. A strategy of $\exists$ ($\forall$) in
the game $G(\mathfrak{A},f,\#,\varphi)$ is a \emph{winning strategy}
if every play of the game where $\exists$ ($\forall$) makes
her moves according to the strategy, ends after a finite number 
of rounds in a position where $\exists$ ($\forall$) wins.
We write $\mathfrak{A},f\models^+\varphi$ iff the player $\exists$
has a winning strategy in the game $G(\mathfrak{A},f,+,\varphi)$.
We write $\mathfrak{A},f\models^-\varphi$ iff $\exists$
has a winning strategy in the game $G(\mathfrak{A},f,-,\varphi)$.
By duality of the rules of the game, it is easy to see that $\exists$
has a winning strategy in $G(\mathfrak{A},f,-,\varphi)$ iff $\forall$
has a winning strategy in $G(\mathfrak{A},f,+,\varphi)$. Similarly, $\exists$
has a winning strategy in $G(\mathfrak{A},f,+,\varphi)$ iff $\forall$
has a winning strategy in $G(\mathfrak{A},f,-,\varphi)$.
Let $\varphi$ be a \emph{sentence} of $\mathcal{L}(\sigma)$.
We write $\mathfrak{A}\models^+\varphi$ iff $\mathfrak{A},\emptyset\models^+\varphi$,
where $\emptyset$ denotes the empty valuation. Similarly, we write $\mathfrak{A}\models^-\varphi$
iff $\mathfrak{A},\emptyset\models^-\varphi$.
\section{Turing-Completeness}\label{four}
Let $\sigma$ be a finite nonempty set of unary relation symbols. let $\mathit{Succ}$ be a
binary relation symbol.
A \emph{word model} $\mathfrak{A}$ over 
the vocabulary $\{\mathit{Succ}\}\cup\sigma$ is defined as follows.
\begin{enumerate}
\item
The domain of $\mathfrak{A}$ is a
nonempty finite set.
\item
The binary predicate $\mathit{Succ}$ is a successor relation over $A$, i.e., a
binary relation corresponding to a linear order, but with maximum out-degree
and in-degree equal to one.
\item
Let $b\in A$ denote the smallest element with respect to $\mathit{Succ}$.
We have $b\not\in P^{\mathfrak{A}}$ for each $P\in\sigma$. (This is because 
we do not want to consider models with the empty domain; the empty word
will correspond to the word model with exactly one element.)
For each element $a\in A\setminus\{b\}$, there exists exactly one predicate $P\in\sigma$
such that $a\in P^{\mathfrak{A}}$.
\end{enumerate}
Word models canonically encode finite words. For example 
the word $abbaa$ over the alphabet $\{a,b\}$ is encoded by the word model $\mathfrak{M}$
over the vocabulary $\{\mathit{Succ}, P_a,P_b\}$ defined as follows.
\begin{enumerate}
\item
$M = \{0,...,5\}$.
\item
$\mathit{Succ}^{\mathfrak{M}}$ is the canonical successor relation
on $M$.
\item
$P_a^{\mathfrak{M}} = \{1,4,5\}$
and 
$P_b^{\mathfrak{M}} = \{2,3\}$.
\end{enumerate}
If $w$ is a finite word, we let $\mathcal{M}(w)$ denote its encoding
by a word model in the way defined above. If $W$ is a set of finite words,
then $\mathcal{M}(W) = \{\, \mathcal{M}(w)\, |\, w\in W\, \}$.
If $\Sigma$ is a finite nonempty alphabet, we let $\mathcal{M}(\Sigma)$
denote the vocabulary $\{\, \mathit{Succ}\, \}\cup\{\, P_a\, |\, a\in\Sigma\, \}$.
We define computation of Turing machines in the standard way that involves a possible
\emph{tape alphabet} in addition to an \emph{input alphabet}.
These two alphabets are disjoint.
Let $\Sigma$ be a finite nonempty alphabet.
Then $\Sigma^*$ is the set of all inputs to a
Turing machine $\mathrm{TM}$ whose input alphabet is $\Sigma$.
During computation, $\mathrm{TM}$ may employ an additional finite set $S$
of tape symbols. That set $S$ is the tape alphabet of $\mathrm{TM}$.
There is a nice loose analogy
between tape alphabet symbols of Turing machines and relation variable
symbols in $\mathrm{VAR}_{\mathrm{SO}}$
used in formulas of $\mathcal{L}$.
\begin{theorem}\label{firsttheorem}
Let $\Sigma$ be a finite nonempty alphabet.
Let $\mathrm{TM}$ be a deterministic Turing machine with the input
alphabet $\Sigma$. Then there exists a sentence $\varphi_{\mathrm{TM}}\in\mathcal{L}(\mathcal{M}(\Sigma))$
such that the following conditions hold.
\begin{enumerate}
\item
Let $W\subseteq\Sigma^*$ be the set of words $w$ such
that $\mathrm{TM}$ halts in an accepting state with the input $w$.
Then for all $w\in\Sigma^*$, $\mathcal{M}(w)\models^+\varphi_{\mathrm{TM}}$ iff $w\in W$.
\item
Let $U\subseteq\Sigma^*$ be the set of words $u$ such
that $\mathrm{TM}$ halts in a rejecting state with the input $u$.
Then for all $w\in\Sigma^*$, $\mathcal{M}(w)\models^-\varphi_{\mathrm{TM}}$ iff $w\in U$.
\end{enumerate}
\end{theorem}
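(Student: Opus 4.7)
The plan is to construct, for each deterministic Turing machine $\mathrm{TM}$, a sentence $\varphi_{\mathrm{TM}}$ whose semantic game on a word model $\mathcal{M}(w)$ simulates step by step the computation of $\mathrm{TM}$ on input $w$. The verifier (that is, $\exists$ under polarity $+$, and by duality $\forall$ under polarity $-$) will essentially be forced to drive the simulation forward along the unique deterministic run of $\mathrm{TM}$: accepting states will make the game terminate in a position that is a win for the verifier, rejecting states in a position that is a loss for the verifier, and divergent runs of $\mathrm{TM}$ will correspond to plays of length $\omega$, which by the rules of $\mathcal{L}$ give neither player a win.

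A configuration of $\mathrm{TM}$ is encoded inside $(\mathcal{M}(w),g)$ by auxiliary second-order relation variables: a unary $H$ marking the head position; a unary $\mathrm{St}_q$ for each state $q$ which contains the head when the current state is $q$ and is empty otherwise; and, for each tape-alphabet symbol $s$ outside $\Sigma$ (including the blank), a unary $Y_s$ tracking the cells currently carrying $s$. Input symbols $a\in\Sigma$ are tracked by the vocabulary symbols $P_a$ themselves, which can be rewritten using $I_{P_a\ldots}$ and $D_{P_a\ldots}$. Fresh cells to the right of the input word are produced on demand: whenever the head has to move past the current rightmost cell, the loop executes $Ix$, extends $\mathit{Succ}$ via $I_{\mathit{Succ}\ldots}$, and inserts the new element into $Y_{\mathrm{blank}}$. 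The sentence $\varphi_{\mathrm{TM}}$ is then an initialization block (a nested sequence of $Ix_0$, $I_{Hx_0}$, $I_{\mathrm{St}_{q_0}x_0}$ operators that fixes the leftmost cell and sets up the starting configuration) wrapped around a single loop $1\,\mathrm{Body}$. Inside $\mathrm{Body}$ I use the Hintikka-style if-then-else gadget $\bigl((\neg P\vee\varphi_1)\wedge(P\vee\varphi_2)\bigr)$, with $\vee$ coded as $\neg(\neg\_\wedge\neg\_)$, which under either polarity routes play into $\varphi_1$ when $P$ holds and into $\varphi_2$ otherwise and punishes the verifier for lying about $P$. Cascading the gadget over the tests ``is the current state $q$?'' (read off $\mathrm{St}_q$) and ``is the current symbol at the head $s$?'' (read off $P_a$ or $Y_s$) identifies the actual pair $(q,s)$; the matching branch performs the transition $\delta(q,s)=(q',s',d)$ as a nested sequence of $D$- and $I$-operators (removing the old state marker and the old symbol, writing $s'$, moving $H$ via $\mathit{Succ}$, extending the tape if necessary, and inserting $\mathrm{St}_{q'}$) and finishes with the atomic $1$ as the loop-back. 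The accepting (resp.\ rejecting) branches do not loop; they end in a trivially true (resp.\ false) atom such as $x_0=x_0$ (resp.\ $F(x_0)$ for a unary second-order variable $F$ kept empty throughout), so that under polarity $+$ the verifier wins accepting terminations and loses rejecting ones.

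Correctness is then a straightforward induction on the number of simulated steps. If $\mathrm{TM}$ accepts $w$ in $n$ steps, the verifier's strategy of always making the choices dictated by the actual computation is unconditionally winning after $n$ rounds; if $\mathrm{TM}$ rejects $w$, any verifier strategy that follows the computation is forced into a false atom, while any strategy that deviates is punished in the if-then-else cascade, so the opponent has a winning strategy; if $\mathrm{TM}$ diverges, every faithful play continues for $\omega$ rounds and nobody wins. This establishes clause~(1); clause~(2) then follows from the duality $\exists$ wins $G(\mathcal{M}(w),\emptyset,-,\varphi_{\mathrm{TM}})$ iff $\forall$ wins $G(\mathcal{M}(w),\emptyset,+,\varphi_{\mathrm{TM}})$. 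The main technical obstacle, in my view, is pinning down the if-then-else cascade tightly enough that every deviation of the verifier is demonstrably punishable while no spurious commitment is forced on her; this in turn requires maintaining clean invariants on $H$, $\mathrm{St}_q$, and the symbol relations across every pass through the loop, and once these are nailed down the remainder is routine case analysis on the transition function.
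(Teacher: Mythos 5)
Your construction is correct and follows essentially the same route as the paper: a single recursive loop $1(\cdots)$ whose body branches on the current state--symbol pair via guarded subformulae, executes one transition with the $I$/$D$ operators (extending $\mathit{Succ}$ with $Ix$ and $I_{\mathit{Succ}\,uv}$ when the head runs off the tape), terminates in $\top$/$\bot$ at accepting/rejecting states so that divergence yields infinite plays won by neither player, and appeals to duality for the $\models^-$ clause. The only divergence is bookkeeping: the paper tracks the state and head position with first-order variables (an equality test $y_{\mathit{state}}=x_q$ and a two-variable ping-pong $x^1_{\mathit{head}},x^2_{\mathit{head}}$ to keep the old head position accessible), whereas you use unary relation variables $\mathrm{St}_q$ and $H$ together with the variable-binding of the $D$/$I$ operators --- both encodings work.
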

\begin{proof}
%
%
We shall define a sentence $\varphi_{\mathrm{TM}}$ such that the
semantic games involving $\varphi_{\mathrm{TM}}$ simulate
the action of $\mathrm{TM}$.
Let $Q$ be the set of states of $\mathrm{TM}$.
For each $q\in Q$, reserve a variable symbol $x_q$.
Furthermore, let $y_{\mathit{state}}$ be a variable symbol.
Intuitively, the equality $y_{\mathit{state}} = x_q$ will
hold in the semantic game $G(\mathcal{M}(w),\emptyset,+,\varphi_{\mathrm{TM}})$
exactly when $\mathrm{TM}$ is in the state $q$ during a
run with the input $w$.
Simulating the action of the head of the Turing machine $\mathrm{TM}$ is a bit
more complicated, since when defining the 
new position of the head with a subformula of $\varphi_{\mathrm{TM}}$, information
concerning the old
position must be somehow accessible.\hspace{0.4mm}\footnote{Note that we
assume, w.l.o.g., that $\mathrm{TM}$ has a single head.}
Fix \emph{two} variables $x_{\mathit{head}}^1$ and $x_{\mathit{head}}^2$.
These variables will encode the position of the head. Define three further variables
$y_{\mathit{head}}^1$, $y_{\mathit{head}}^2$, and $y_{\mathit{head}}$.
The tape of $\mathrm{TM}$ will be encoded by the (dynamically extendible) successor relation $\mathit{Succ}$,
which is a part of the model (or models, to be exact) constructed during the semantic game.
The variables $x_{\mathit{head}}^1$ and $x_{\mathit{head}}^2$ will denote
elements of the successor relation. Intuitively, $y_{\mathit{head}} = y_{\mathit{head}}^1$
will mean that $x_{\mathit{head}}^1$ indicates the current position of the head of $\mathrm{TM}$,
while $y_{\mathit{head}} = y_{\mathit{head}}^2$ will mean that $x_{\mathit{head}}^2$ 
indicates the position of the head of $\mathrm{TM}$.
The value of $x_{\mathit{head}}^1$ will always be
easily definable based on the value of $x_{\mathit{head}}^2$, 
and vice versa, the value of $x_{\mathit{head}}^2$ will be
definable based on the value of $x_{\mathit{head}}^1$.
If $\mathrm{TM}$ employs tape alphabet symbols $s\not\in\Sigma$,
these can be encoded by unary relation variables $X_s$. Intuitively, 
if $u$ is an element of the domain of the model under investigation, then $X_s(u)$
will mean that the point of the tape of $\mathrm{TM}$ corresponding to $u$
contains the symbol $s$. Similarly, for an input alphabet symbol $t\in\Sigma$\hspace{0.3mm},
\ $P_t(u)$
will mean that the point of the tape of $\mathrm{TM}$ corresponding to $u$
contains the symbol $t$.
The sentence $\varphi_{\mathrm{TM}}$ will contain subformulae which are \emph{essentially}
(but not exactly, as we shall see) of the type
$$\bigl(\psi_{\mathit{state}}\wedge\psi_{\mathit{tape\_position}}\bigr)\ \rightarrow\
\bigl(\psi_{\mathit{new\_state}}\wedge\psi_{\mathit{new\_tape\_position}}\wedge \mathit{loop}\bigr),$$
where $\mathit{loop}$ is simply the atomic formula $1$, which indicates
that the semantic game ought to be continued from some subformula $1\psi$ of $\varphi_{\mathrm{TM}}$.
The sentence $\varphi_{\mathrm{TM}}$ will also contain subformulae which are \emph{essentially} of the type
$$\bigl(\psi_{\mathit{state}}\wedge\psi_{\mathit{tape\_position}}\bigr)\ \rightarrow\
\bigl(\psi_{\mathit{new\_final\_state}}\wedge\psi_{\mathit{new\_tape\_position}}\wedge \top\bigr)$$
and
$$\bigl(\psi_{\mathit{state}}\wedge\psi_{\mathit{tape\_position}}\bigr)\ \rightarrow\
\bigl(\psi_{\mathit{new\_final\_state}}\wedge\psi_{\mathit{new\_tape\_position}}\wedge \bot\bigr)$$
where in the first case the final state is an
accepting state, and in the second case a
rejecting state. Here $\top$ ($\bot$) is the formula $\forall x\, x=x$\,  ($\neg\forall x\, x=x $).
Let $s,t\in\Sigma$ be input alphabet symbols of $\mathrm{TM}$.
Consider a transition instruction of $\mathrm{TM}$ of the type
$T(q_i,s) = (q_j,t,\mathit{right})$, which states that if the state is $q_i$
and the symbol scanned is $s$, then write $t$ to the current cell,
change state to $q_j$, and move right.
Let us call this instruction $\mathit{instr}$.
The instruction $\mathit{instr}$ defines a formula $\psi_{\mathit{instr}}$.
Assume $q_j$ is \emph{not} a final state.
Let us see how $\psi_{\mathit{instr}}$ is constructed.
Define the formula $\psi_{\mathit{state}}^{q_i} := y_{\mathit{state}} = x_{q_i}$.
Define the formula $\psi_{\mathit{symbol}}^s$ to be the conjunction of the following formulae.
\begin{enumerate}
\item
$y_{\mathit{head}} = y_{\mathit{head}}^1\ \rightarrow\ P_s(x_{\mathit{head}}^1)$,
\item
$y_{\mathit{head}} = y_{\mathit{head}}^2\ \rightarrow\ P_s(x_{\mathit{head}}^2)$.
\end{enumerate}
Define $\chi_1'$ to be the formula
\begin{multline*}
D_{P_s\, x}\, I_{P_t\, y}\, \exists x_{\mathit{head}}^2\exists y_{\mathit{head}}\exists y_{\mathit{state}}
\bigl(\ x = x_{\mathit{head}}^1\, \wedge\, y = x_{\mathit{head}}^1\, \wedge\, 
y_{\mathit{head}} = y_{\mathit{head}}^2\, \wedge\, y_{\mathit{state}} = x_{q_j}\, \wedge\, \chi'\, \wedge\, 1\ \bigr),
\end{multline*}
where $\chi'$ is a formula that forces $x_{\mathit{head}}^2$ to be interpreted as 
the successor of $x_{\mathit{head}}^1$ with respect to $\mathit{Succ}$.
It is possible that no successor of $x_{\mathit{head}}^1$ exists in the current model.
In that case a successor can be constructed by appropriately using the
operators $Iz$ and $I_{\mathit{Succ\, uv}}$.
To cover this case, define $\chi_1''$ to be the formula
\begin{multline*}
D_{P_s\, x}\, I_{P_t\, y}\, Iz\  I_{\mathit{Succ\, uv}}\,
\exists x_{\mathit{head}}^2\exists y_{\mathit{head}}\exists y_{\mathit{state}}
\bigl(\ x = x_{\mathit{head}}^1\, \wedge\, y = x_{\mathit{head}}^1\, \wedge\, 
y_{\mathit{head}} = y_{\mathit{head}}^2\, \wedge\, y_{\mathit{state}} = x_{q_j}\, \wedge\, \chi'\,
\wedge\, \chi''\, \wedge\, 1\ \bigr),
\end{multline*}
where $\chi''$ forces the fresh point $z$ to be the
successor of $x_{\mathit{head}}^1$ with respect to $\mathit{Succ}$,
and $\chi'$ forces  $x_{\mathit{head}}^2$ to be
the successor of $x_{\mathit{head}}^1$.
Let $\alpha$ be a formula that states that $x_{\mathit{head}}^1$
has a successor with respect to $\mathit{Succ}$ in the current model.
Define $\chi_1$ to be the conjunction
$(\alpha\rightarrow\chi_1')\wedge(\neg\alpha\rightarrow\chi_1'')$.
The formula $\chi_1$ simulates the instruction $\mathit{instr}$ when
the current position of the head of $\mathrm{TM}$ is encoded by $x_{\mathit{head}}^1$.
The formula determines a new position for $x_{\mathit{head}}^2$ based
on the current position of $x_{\mathit{head}}^1$.
A similar formula $\chi_2$ can be defined analogously to deal with
the situation where the current position of the head is encoded by $x_{\mathit{head}}^2$.
Define $\beta$ to be the conjunction of the formulae
\begin{enumerate}
\item
$y_{\mathit{head}} = y_{\mathit{head}}^1\ \rightarrow\ \chi_1$,
\item
$y_{\mathit{head}} = y_{\mathit{head}}^2\ \rightarrow\ \chi_2$.
\end{enumerate}
Define $\psi_{\mathit{instr}}$ to be the formula
$\bigl(\psi_{\mathit{state}}^{q_i}\wedge\psi_{\mathit{symbol}}^{s}\bigr)\ \ \ \rightarrow\ \ \ \beta.$
Formulae $\psi_{\mathit{instr}'}$, where $\mathit{instr}'$ tells $\mathrm{TM}$
to move to a final state, are defined similarly, but do not have the atom $1$.
Instead, accepting states have the atom $\top$ and rejecting states the atom $\bot$.
We shall not explicitly discuss for example instructions where the head is to move left,
since all possible instructions can be easily specified by
formulae analogous to the ones above.
Recall that $Q$ is the set of states of $\mathrm{TM}$.
Let $q_1,...,q_n$ enumerate the elements of $Q$. Define
$$I{\overline{x}}\ :=\ Iy_{\mathit{head}}^1\, 
Iy_{\mathit{head}}^2\, 
Ix_{q_1}....Ix_{q_n}.$$
Let $\mathbb{I}$ be the set of instructions of $\mathrm{TM}$.
The sentence $\varphi_{\mathrm{TM}}$ is the formula
$$I\overline{x}\ \exists y_{\mathit{head}}\exists x_{\mathit{head}}^1
\exists x_{\mathit{head}}^2\exists y_{\mathit{state}}
\bigl(\ \psi_{\mathit{initial}}\ \wedge\ 1\bigl(\bigwedge
\limits_{\mathit{instr}\, \in\, \mathbb{I}}\psi_{\mathit{instr}}\bigr)\ \bigr),$$
where $\psi_{\mathit{initial}}$ states that the following
conditions hold.
\begin{enumerate}
\item
$y_{\mathit{state}}$
is equal to $x_{q}$, where $q$ is the beginning state of $\mathrm{TM}$.
\item
$y_{\mathit{head}}$ is equal to $y_{\mathit{head}}^1$.
\item
$x_{\mathit{head}}^1$ is interpreted as the point corresponding
to the beginning position of the head of $\mathrm{TM}$.
\end{enumerate}
It is not difficult to see that $\varphi_{\mathrm{TM}}$ corresponds to $\mathrm{TM}$
in the desired way.
\end{proof}
We then prove that every sentence of $\mathcal{L}$ spefifying a
property of word models can be simulated by a Turing machine.
For this purpose, we use K\"{o}nig's Lemma.
\begin{lemma}[K\"{o}nig]
Let $T$ be a finitely branching tree with infinitely many nodes.
Then $T$ contains an infinite branch.
\end{lemma}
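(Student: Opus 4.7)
The plan is to construct an infinite branch of $T$ inductively, exploiting the finitary branching at each node together with the global infinitude of $T$ via a pigeonhole argument.

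First, I would fix notation: for any node $v$ of $T$, let $T_v$ denote the subtree consisting of $v$ together with all its descendants, and let $\mathit{Ch}(v)$ denote the (finite) set of children of $v$. The key combinatorial observation I would use repeatedly is the following: if $T_v$ is infinite and $\mathit{Ch}(v) = \{v_1, \dots, v_k\}$, then at least one $T_{v_i}$ is infinite. This holds because $T_v = \{v\} \cup T_{v_1} \cup \dots \cup T_{v_k}$, and a finite union of finite sets together with one extra element is finite, contradicting the hypothesis that $T_v$ is infinite.

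Next I would carry out the recursive construction of an infinite branch $v_0, v_1, v_2, \dots$. Set $v_0$ to be the root of $T$; by assumption $T = T_{v_0}$ is infinite. Assuming $v_n$ has been defined and $T_{v_n}$ is infinite, invoke the observation above to choose some child $v_{n+1} \in \mathit{Ch}(v_n)$ such that $T_{v_{n+1}}$ is infinite. This is possible precisely because $\mathit{Ch}(v_n)$ is finite (the tree is finitely branching) and $T_{v_n}$ is infinite. The resulting sequence $(v_n)_{n \in \mathbb{N}}$ is an infinite branch of $T$, since $v_{n+1}$ is a child of $v_n$ for every $n$.

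The main conceptual obstacle, such as it is, lies in the appeal to dependent choice implicit in the recursive selection of the children $v_{n+1}$. In the setting of this paper (standard ZFC background) this is unproblematic, so I would simply note it in passing; in a more foundationally careful treatment one would instead replace the arbitrary choice by, for instance, a fixed well-ordering of the nodes of $T$ and always select the least child whose subtree is infinite, thereby making the definition explicit and eliminating the choice principle.
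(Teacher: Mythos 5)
Your proof is correct and is the standard argument for K\"{o}nig's Lemma: descend from the root by repeatedly choosing a child whose subtree is infinite, which exists by the pigeonhole principle since the branching is finite. The paper states this lemma without proof, citing it as a classical result, so there is no divergence to report; your remark about dependent choice is a reasonable extra precaution but not needed in the paper's setting.
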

In the following, \emph{accepting} means halting in an
accepting state, and \emph{rejecting} means halting in a
rejecting (i.e., non-accepting) state.
\begin{theorem}\label{secondtheorem}
Let $\Sigma$ be a finite nonempty alphabet.
Let $\varphi$ be a sentence of $\mathcal{L}(\mathcal{M}(\Sigma))$.
Then there exists a deterministic Turing machine $\mathrm{TM}$ such
that the following conditions hold.
\begin{enumerate}
\item
Let $W\subseteq\Sigma^*$ be the set of words $w$ such
that $\mathcal{M}(w)\models^+\varphi$.
Then for all $w\in\Sigma^*$, $\mathrm{TM}$ accepts $w$ iff $w\in W$.
\item
Let $U\subseteq\Sigma^*$ be the set of words $w$ such
that $\mathcal{M}(w)\models^-\varphi$.
Then for all $w\in\Sigma^*$, $\mathrm{TM}$ rejects $w$ iff $w\in U$.
\end{enumerate}
\end{theorem}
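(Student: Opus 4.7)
The plan is to have $\mathrm{TM}$ simulate an exhaustive, dovetailed search for a witness that $\exists$ has a winning strategy in $G(\mathcal{M}(w),\emptyset,+,\varphi)$ (in which case $\mathrm{TM}$ accepts) or in $G(\mathcal{M}(w),\emptyset,-,\varphi)$ (in which case $\mathrm{TM}$ rejects). First, $\mathrm{TM}$ computes $\mathcal{M}(w)$ from $w$, which is straightforward since the encoding is explicit. The crucial observation is that every game position $(\mathfrak{B},g,\#',\psi)$ reachable during a play from $(\mathcal{M}(w),\emptyset,\#,\varphi)$ is a finite object: $\mathfrak{B}$ is a finite model (its domain grows only by finitely many insertions $Ix$), $g$ has finite domain with values in $B$ or in $\mathcal{P}(B^k)$, $\#'\in\{+,-\}$, and $\psi$ is (a pointer to) one of the finitely many subformulae of $\varphi$. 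Hence positions are tape-representable, and the legal successors of any given position are effectively enumerable using the clauses of Section \ref{sectionthree}.

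Call a \emph{positive witness} a finite rooted subtree $T$ of the game tree of $G(\mathcal{M}(w),\emptyset,+,\varphi)$ satisfying: (i) its root is the initial position; (ii) at every internal node that corresponds to a choice by $\exists$, exactly one successor is present in $T$; (iii) at every internal node that is a $\forall$-choice or a deterministic transition, \emph{all} successors are present in $T$; and (iv) every leaf is a terminal position won by $\exists$ under the rules of Section \ref{sectionthree} (including the pathological $k$-termination case with $\#'=-$). Define \emph{negative witness} symmetrically for $G(\mathcal{M}(w),\emptyset,-,\varphi)$. The machine $\mathrm{TM}$ enumerates candidate finite trees of both kinds by iterative deepening on tree height; for each candidate it mechanically checks (i)--(iv). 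It dovetails the two enumerations, halts in its accept state as soon as a positive witness appears, and halts in its reject state as soon as a negative witness appears.

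Completeness of this search is the place where K\"{o}nig's Lemma is used. Suppose $\exists$ has a winning strategy $\sigma$ in $G(\mathcal{M}(w),\emptyset,+,\varphi)$. Let $T_\sigma$ be the subtree of the full game tree obtained by retaining, at every $\exists$-choice node, only the successor prescribed by $\sigma$, and retaining all successors at every other internal node. The tree $T_\sigma$ is finitely branching, because at any reachable position the model $\mathfrak{B}$ is finite and the relevant subformula $\varphi$ has only finitely many subformulae, so the opponent's menu of moves (picking an element, a tuple, a conjunct, or an alternative $k\chi$) is finite at each node. Since $\sigma$ is winning, every maximal branch of $T_\sigma$ terminates after finitely many rounds in an $\exists$-winning position. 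K\"{o}nig's Lemma therefore yields that $T_\sigma$ itself is finite, and this $T_\sigma$ is precisely a positive witness in the sense above; the search will eventually enumerate it. The converse is immediate, since any positive witness trivially defines a winning strategy for $\exists$. The same argument, with roles exchanged, applies to the negative case.

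The main obstacle is careful bookkeeping: because operators such as $Ix$, $I_{R x_1,\dots,x_k}$, $D_{X x_1,\dots,x_k}$ and the looping atom $k$ modify the model, the assignment, or the pointer into $\varphi$ in delicate ways, one must verify case by case that the successor function on positions is computable and that the terminal winning conditions (including the $k$-pathological case) are decidable. These verifications are routine given the explicit clauses in Section \ref{sectionthree}. Granting them, the three-way correspondence is exact: $\mathrm{TM}$ accepts $w$ iff $\mathcal{M}(w)\models^+\varphi$, rejects $w$ iff $\mathcal{M}(w)\models^-\varphi$, and otherwise diverges, which is exactly what the statement requires.
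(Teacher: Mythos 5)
Your proposal is correct and follows essentially the same route as the paper: both perform an iterative-deepening search for a winning strategy that wins within a bounded number of rounds (your finite witness trees are exactly such bounded strategies), and both invoke K\"{o}nig's Lemma on the finitely branching, strategy-restricted game tree to show that any winning strategy admits such a finite bound, so the search terminates whenever $\mathcal{M}(w)\models^+\varphi$ or $\mathcal{M}(w)\models^-\varphi$. Your write-up is somewhat more explicit than the paper's about the witness-tree data structure and the computability of the position-successor relation, but the underlying argument is the same.
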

\begin{proof}
%
%
Fix some positive integer $k$.
Given an input word $w$, the Turing machine $\mathrm{TM}$ first enumerates all plays of
$G(\mathcal{M}(w),\emptyset,+,\varphi)$ with $k$ rounds or less.
If $\exists$ wins such a play, $\mathrm{TM}$ checks whether
there is a winning strategy for 
$\exists$ that always leads to a win in $k$ or fewer rounds,
meaning that no play where $\exists$ follows the
strategy lasts for $k+1$ rounds or more, and $\exists$ wins all plays where she
follows her strategy.
Similarly, if $\forall$ wins a play with $k$ or fewer rounds, $\mathrm{TM}$ checks whether
there is a winning strategy for 
$\forall$ that always leads to a win in at most $k$ rounds.
If there is such a strategy for $\exists$ ($\forall$), then $\mathrm{TM}$ halts
in an accepting (rejecting) state.
If no winning strategy is found, the machine $\mathrm{TM}$ 
checks all plays with $k+1$ rounds. Again,
if $\exists$ wins such a play, $\mathrm{TM}$ checks whether
there is a winning strategy for 
$\exists$ that always leads to a win in at
most $k+1$ rounds, and similarly for $\forall$.
Again, if a winning strategy for $\exists$ ($\forall$) is found,
then $\mathrm{TM}$ halts in an accepting (rejecting) state.
If no winning strategy is found, the machine scans all plays of the
length $k+2$, and so on. This process of scanning increasingly
long plays is carried on potentially infinitely long.
Now assume, for the sake of contradiction, that $\exists$ ($\forall$) has a winning strategy
with arbitrarily long plays resulting from following the strategy.
Then the game tree restricted to paths where $\exists$ ($\forall$) follows
the strategy has infinitely many nodes.
Let $T$ denote the restriction of the game tree to paths where the strategy is followed.
Since each game position can have only finitely many successor positions,
and since $T$ is infinite, we conclude by K\"{o}nig's lemma
that $T$ has an infinite branch.
Thus the strategy of $\exists$ ($\forall$)
cannot be a winning strategy. This is a contradiction. Hence each winning strategy has a
finite bound $n$ such that each play where the strategy is followed,
goes on for at most $n$ rounds.
Thus $\mathrm{TM}$ has the desired properties.
The crucial issue here is that there exist a \emph{finite}
number of possible moves at every position of the game.
This finiteness is due to the underlying models always being finite and
properties of the operators of the logic $\mathcal{L}$.
\end{proof}
Note that our translations of Turing machines to formulae of
$\mathcal{L}$ and formulae of $\mathcal{L}$ to
Turing machines are both effective.
\section{Arbitrary Structures}\label{five}
Above we limited attention to word models. This is not necessary,
as Theorems \ref{firsttheorem} and \ref{secondtheorem}
can easily be generalized to the context of arbitrary finite structures.
In this section we show how this generalization can be done.
When investigating computations on structure classes (rather than strings),
Turing machines of course operate on 
\emph{encodings} of structures.
We will use the encoding scheme of \cite{libkin}.
Let $\tau$ be a finite relational vocabulary and $\mathfrak{A}$ a
finite $\tau$-structure. In order to encode the structure $\mathfrak{A}$ by a binary string,
we first need to define a linear ordering of the domain $A$ of $\mathfrak{A}$.
Let $<^{\mathfrak{A}}$ denote such an ordering.
Let $R\in\tau$ be a $k$-ary relation symbol.
The encoding $\mathit{enc}(R^{\mathfrak{A}})$
of $R^{\mathfrak{A}}$ is the $|A|^k$-bit string defined as follows.
Consider an enumeration of all $k$-tuples over $A$ in
the \emph{lexicographic order} defined with respect to $<^{\mathfrak{A}}$.
In the lexicographic order, $(a_1,...,a_k)$ is smaller than $(a_1',...,a_k')$
iff there exists $i\in\{1,...,k\}$ such that $a_i < a_i'$ 
and $a_j = a_j'$ for all $j < i$.
There are $| A |^k$ tuples in $A^k$. The string
$\mathit{enc}(R^{\mathfrak{A}})$
is the string
$t \in\{0,1\}^*$
of the length $| A |^k$
such that the bit $t_i$ of $t = t_1\, ...\, t_{| A |^k}$ is $1$
if and only if the $i$-th tuple $(a_1,...,a_k)\in A^{k}$ in the lexicographic order
is in the relation $R^{\mathfrak{A}}$.
The encoding $\mathit{enc}(\mathfrak{A})$ is defined as follows.
We first order the relations in $\tau$. Let $p$ be the number of
relations in $\tau$, and let $R_1,...,R_p$ enumerate the
symbols in $\tau$ according to the order.
We define
$$\mathit{enc}(\mathfrak{A})\ :=\ 0^{|A|}\cdot 1\cdot \mathit{enc}(R_1^{\mathfrak{A}})\cdot...\cdot
\mathit{enc}(R_p^{\mathfrak{A}}).$$
Notice that the encoding of $\mathfrak{A}$ depends on
the order $<^{\mathfrak{A}}$ and the ordering of the relation symbols in $\tau$.
Let $\mathcal{C}$ be the class of exactly all finite $\tau$-models.
Let $\mathcal{C}_+$, $\mathcal{C}_-$ and $\mathcal{C}_0$ 
be subclasses of $\mathcal{C}$ such that the following conditions hold.
\begin{enumerate}
\item
Each of the three classes
$\mathcal{C}_+$, $\mathcal{C}_-$ and $\mathcal{C}_0$ is closed under isomorphism.
\item
The classes are disjoint, i.e., the intersection
of any two of the three classes is empty.
\item
$\mathcal{C}_+\cup\mathcal{C}_-\cup\mathcal{C}_0\, =\, \mathcal{C}$.
\end{enumerate}
We say that $(\mathcal{C}_+,\mathcal{C}_-,\mathcal{C}_0)$
is a \emph{Turing classification} of finite $\tau$-models if there
exists a Turing machine $\mathrm{TM}$ such that the following conditions hold.
\begin{enumerate}
\item
The input alphabet of $\mathrm{TM}$ is $\{0,1\}$.
\item
$\mathrm{TM}$ rejects every input string that is 
not of the type $\mathit{enc}(\mathfrak{A})$ for any finite $\tau$-strucure $\mathfrak{A}$.
\item
There exists an ordering $<^{\tau}$ of $\tau$ such that the following conditions hold.
\begin{enumerate}
\item
Let $\mathfrak{A}\in\mathcal{C}$.
Let $\mathit{enc}(\mathfrak{A})$ and $\mathit{enc}'(\mathfrak{A})$ be
two encodings of $\mathfrak{A}$, both using
the order $<^{\tau}$ of $\tau$ but possibly a different
ordering of $A$.
Then one of the following three conditions holds.
\begin{enumerate}
\item
$\mathrm{TM}$ accepts both strings $\mathit{enc}(\mathfrak{A})$ and $\mathit{enc}'(\mathfrak{A})$.
\item
$\mathrm{TM}$ rejects both strings $\mathit{enc}(\mathfrak{A})$ and $\mathit{enc}'(\mathfrak{A})$.
\item
$\mathrm{TM}$ diverges on both input strings $\mathit{enc}(\mathfrak{A})$ and $\mathit{enc}'(\mathfrak{A})$.
\end{enumerate}
\item
Let $\mathfrak{A}\in\mathcal{C}$.
Let $\mathit{enc}(\mathfrak{A})$ be an encoding of $\mathfrak{A}$
according to the order $<^{\tau}$.
The following conditions hold.
\begin{enumerate}
\item
$\mathrm{TM}$ accepts $\mathit{enc}(\mathfrak{A})$ iff $\mathfrak{A}\in\mathcal{C}_+$.
\item
$\mathrm{TM}$ rejects $\mathit{enc}(\mathfrak{A})$ iff $\mathfrak{A}\in\mathcal{C}_-$.
\item
$\mathrm{TM}$ diverges on the input $\mathit{enc}(\mathfrak{A})$ iff $\mathfrak{A}\in\mathcal{C}_0$.
\end{enumerate}
\end{enumerate}
\end{enumerate}
We say that $\mathrm{TM}$ \emph{witnesses}
the Turing classification $(\mathcal{C}_+,\mathcal{C}_-,\mathcal{C}_0)$.
The logic $\mathcal{L}$ combines the expressivity of first-order logic with the possibility
of building fresh relations over fresh domain elements. The recursive looping capacity
enables a flexible way of using such fresh constructions. Therefore it is not
difficult to see that the following theorem holds.
\begin{theorem}\label{thirdtheorem}
Let $\tau$ be a finite relational vocabulary and 
$(\mathcal{C}_+,\mathcal{C}_-,\mathcal{C}_0)$ a
Turing classification of finite $\tau$-models.
Let $\mathrm{TM}$ be a Turing machine that
witnesses the classification $(\mathcal{C}_+,\mathcal{C}_-,\mathcal{C}_0)$.
Then there exists a sentence $\varphi_{\mathrm{TM}}$ of $\mathcal{L}(\tau)$ such that
the following conditions hold for finite $\tau$-models $\mathfrak{A}$.
\begin{enumerate}
\item
$\mathfrak{A}\models^+\varphi_{\mathrm{TM}}$ iff\, $\mathfrak{A}\in\mathcal{C}_+$\, .
\item
$\mathfrak{A}\models^-\varphi_{\mathrm{TM}}$ iff\, $\mathfrak{A}\in\mathcal{C}_-$\, .
\end{enumerate}
\end{theorem}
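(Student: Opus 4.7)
The plan is to reduce Theorem \ref{thirdtheorem} to Theorem \ref{firsttheorem} by having $\varphi_{\mathrm{TM}}$ first build, inside the semantic game, a fresh word-model copy of $\mathit{enc}(\mathfrak{A})$ on freshly inserted domain points, and then run essentially the Theorem \ref{firsttheorem} formula on that internal copy. Since the signature is just $\tau$, the successor on the encoded tape and the tape symbols $0,1$ are not available as proper relation symbols; they must instead be carried by second-order relation variables $X_{\mathit{Succ}} \in \mathrm{VAR}_{\mathrm{SO}}(2)$ and $X_0, X_1 \in \mathrm{VAR}_{\mathrm{SO}}(1)$. The tape cells themselves are created by $Iz$, and $X_{\mathit{Succ}}, X_0, X_1$ are grown cell-by-cell via the operators $I_{X_{\mathit{Succ}}\, uv}, I_{X_1 u}, I_{X_0 u}$.

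The key subtlety is that $\mathit{enc}(\mathfrak{A})$ depends on an auxiliary linear order of $A$ that is not part of $\mathfrak{A}$. I would therefore open $\varphi_{\mathrm{TM}}$ with a recursive-loop phase that uses the $1$-operator together with $I_{X_< xy}$ to let one of the players guess a relation $X_< \subseteq A^2$ one pair at a time and then exit the loop. The remainder of the sentence is a conjunction $\bigl(\, \mathit{valid}(X_<)\, \wedge\, \mathit{build\_and\_simulate}\, \bigr)$, where $\mathit{valid}(X_<)$ is a first-order sentence asserting totality, transitivity and irreflexivity. In the $+$-game, $\forall$ picks the conjunct: if the guesser committed to a non-order then $\forall$ wins via $\mathit{valid}$, otherwise $\forall$ is forced into the simulation branch; in the $-$-game everything dualises and $\forall$ becomes the guesser. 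Because the definition of Turing classification requires the verdict of $\mathrm{TM}$ to be invariant under the ordering of $A$, whichever valid $X_<$ is produced leads to the same simulation outcome. A short case analysis patterned on the reasoning implicit in Theorem \ref{firsttheorem} then gives $\mathfrak{A} \models^+ \varphi_{\mathrm{TM}} \iff \mathfrak{A} \in \mathcal{C}_+$ and $\mathfrak{A} \models^- \varphi_{\mathrm{TM}} \iff \mathfrak{A} \in \mathcal{C}_-$, with $\mathcal{C}_0$ corresponding to the case in which the simulation play is infinite.

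With $X_<$ in hand, a second recursive loop (with a distinct loop-number, to honour the ban on non-standard jumps) writes $\mathit{enc}(\mathfrak{A})$ onto the fresh tape one bit at a time. It walks through the relation symbols $R_1, \dots, R_p$ of $\tau$ in the fixed order $<^{\tau}$, and inside each block enumerates $A^{\mathit{ar}(R_i)}$ in the lexicographic order induced by $X_<$; at each step it inserts a fresh cell via $Iz$, extends $X_{\mathit{Succ}}$ via $I_{X_{\mathit{Succ}} uv}$, and commits the cell to either $X_1$ or $X_0$ according to whether $R_i(\overline{x})$ currently holds. A sub-conjunction $R_i(\overline{x}) \leftrightarrow X_1(z)$ placed alongside each step uses first-order atomic winning to punish any player who writes down the wrong bit, so the bit-writer has no latitude to cheat. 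The initial $0^{|A|} 1$ prefix is produced by an analogous preliminary sub-loop that walks through $A$. Once the tape is complete, the formula drops into a relativised copy of the Theorem \ref{firsttheorem} construction, in which every reference to $\mathit{Succ}$ or to a unary input-alphabet symbol is replaced by the corresponding second-order variable on the freshly built tape cells.

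The main obstacle is purely combinatorial bookkeeping: writing down in $\mathcal{L}(\tau)$ the lexicographic successor on $k$-tuples, stitching the blocks for successive $R_i$ together, keeping the bit-writer honest via well-placed atomic penalties, and synchronising the three nested recursive loops (the $X_<$-guesser, the tape-writer, and the TM-simulator), each of which needs its own loop-number so as to respect the well-formedness condition on $\mathcal{L}$. Conceptually, however, none of this is deep once Theorem \ref{firsttheorem} and the fact that a linear order on $A$ makes lexicographic enumeration first-order definable are in place, and this is presumably what underlies the informal remark that the generalisation is not difficult to see.
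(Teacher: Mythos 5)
Your proposal follows essentially the same route as the paper's own proof sketch: fix an ordering of $A$ via a fresh relation variable, build a word model of $\mathit{enc}(\mathfrak{A})$ on freshly inserted points with the tape relations carried by variables from $\mathrm{VAR}_{\mathrm{SO}}$, and then run the Theorem \ref{firsttheorem} simulation on that internal copy, with the invariance clause in the definition of a Turing classification absorbing the arbitrariness of the chosen order. You in fact supply more of the bookkeeping (the order-guessing loop with its validity check, the honesty constraints on the bit-writer, the separate loop indices) than the paper, which explicitly skips these details.
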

\begin{proof}[Proof sketch]
The simulation of a machine $\mathrm{TM}$ operating
on encodings of structures $\mathfrak{A}$ is done
by a sentence $\varphi_{\mathrm{TM}}$
of $\mathcal{L}$ as follows.
The ``input" to the formula $\varphi_{\mathrm{TM}}$ is a
finite $\tau$-structure $\mathfrak{A}$.
The formula $\varphi_{\mathrm{TM}}$ first uses $\mathfrak{A}$ in order to
construct a \emph{word model} $\mathfrak{M}_{\mathfrak{A}}$
that corresponds to a string $\mathit{enc}(\mathfrak{A})$ that encodes $\mathfrak{A}$.
The domains of $\mathfrak{M}_{\mathfrak{A}}$ and
$\mathfrak{A}$ are disjoint. The relation symbols of $\mathfrak{M}_{\mathfrak{A}}$
are symbols in $\mathrm{VAR}_{\mathrm{SO}}$, not symbols in $\tau$. 
Once $\mathfrak{M}_{\mathfrak{A}}$ has been constructed,
the formula $\varphi_{\mathrm{TM}}$ uses $\mathfrak{M}_{\mathfrak{A}}$
in order to simulate the computation of $\mathrm{TM}$ on
the string $\mathit{enc}(\mathfrak{A})$.
The simulation is done in the way described in the proof of
Theorem \ref{firsttheorem}\hspace{0.4mm}. 
The construction of the word model $\mathfrak{M}_{\mathfrak{A}}$
from $\mathfrak{A}$ is not difficult. First a fresh successor relation $S^{\mathfrak{A}}$
\emph{over the domain of\, $\mathfrak{A}$}
is constructed using the operator $I_{\mathit{S}\, xy}$.
The symbol $\mathit{S}$ is not in $\tau$. Instead, we use a
fresh symbol in $\mathrm{VAR}_{\mathrm{SO}}$.
Also, the successor symbol $\mathit{S}$ will not be part of the vocabulary of
the word model $\mathfrak{M}_{\mathfrak{A}}$. 
Let $<^{\mathfrak{A}}$ denote the linear order 
canonically associated with the successor relation $S^{\mathfrak{A}}$.
The order $<^{\mathfrak{A}}$, together with 
an ordering of $\tau$, define a string $\mathit{enc}(\mathfrak{A})$.
The model $\mathfrak{M}_{\mathfrak{A}}$ is the
word model corresponding to the string $\mathit{enc}(\mathfrak{A})$.
Due to the \emph{very high expressivity} of the logic $\mathcal{L}$,
is not difficult to build $\mathfrak{M}_{\mathfrak{A}}$
using $S^{\mathfrak{A}}$ and possibly further auxiliary relations.
Thus writing the formula $\varphi_{\mathrm{TM}}$ is
relatively straightforward.
We skip further details.
\end{proof}
\begin{theorem}\label{fourththeorem}
Let $\tau$ be a finite relational vocabulary.
Let $\varphi$ be a $\tau$-sentence of $\mathcal{L}$.
Then there exists a Turing classification $(\mathcal{C}_+,\mathcal{C}_-,\mathcal{C}_0)$
of finite $\tau$-models
such that for all finite $\tau$-models $\mathfrak{A}$, the following conditions hold.
\begin{enumerate}
\item
$\mathfrak{A}\in\mathcal{C}_+$ iff\, $\mathfrak{A}\models^+\varphi$.
\item
$\mathfrak{A}\in\mathcal{C}_-$ iff\, $\mathfrak{A}\models^-\varphi$.
\end{enumerate}
\end{theorem}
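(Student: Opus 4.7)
The plan is to reverse the argument of Theorem \ref{thirdtheorem}: define the three classes semantically from $\varphi$, and then construct a witnessing Turing machine by running a bounded-depth search over the semantic game in the spirit of Theorem \ref{secondtheorem}.

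First I would set $\mathcal{C}_+ := \{\, \mathfrak{A}\in\mathcal{C}\, |\, \mathfrak{A}\models^+\varphi\, \}$, $\mathcal{C}_- := \{\, \mathfrak{A}\in\mathcal{C}\, |\, \mathfrak{A}\models^-\varphi\, \}$, and $\mathcal{C}_0 := \mathcal{C}\setminus(\mathcal{C}_+\cup\mathcal{C}_-)$. The required disjointness of $\mathcal{C}_+$ and $\mathcal{C}_-$ follows from the game duality noted in Section \ref{sectionthree}, which forbids $\exists$ and $\forall$ from both having winning strategies in $G(\mathfrak{A},\emptyset,+,\varphi)$. Closure under isomorphism is immediate, since any isomorphism $\mathfrak{A}\to\mathfrak{A}'$ transports positions, strategies, and the atomic-formula winning conditions intact (the rules for $Ix$, $I_{Rx_1,...,x_k}$, and their duals are defined up to renaming of the fresh point $b := B$). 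Thus $(\mathcal{C}_+,\mathcal{C}_-,\mathcal{C}_0)$ meets the structural requirements of a Turing classification.

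Next I would describe the witnessing machine $\mathrm{TM}$, whose input alphabet is $\{0,1\}$. Fix an ordering $<^{\tau}$ of $\tau$. On input $s$, $\mathrm{TM}$ first checks that $s$ has the shape $0^n\cdot 1\cdot u_1\cdots u_p$ with $|u_i|=n^{k_i}$, where $k_i$ is the arity of the $i$-th relation symbol under $<^{\tau}$; if not, $\mathrm{TM}$ rejects. Otherwise $\mathrm{TM}$ parses $s$ into a $\tau$-structure $\mathfrak{A}$ on the domain $\{1,...,n\}$ and then runs the enumeration procedure from the proof of Theorem \ref{secondtheorem}: for $k=1,2,3,...$, it enumerates all plays of $G(\mathfrak{A},\emptyset,+,\varphi)$ of length at most $k$ and checks whether either player has a strategy winning every continuation within $k$ moves. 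It halts in an accepting state the first time a winning strategy for $\exists$ is found, in a rejecting state the first time a winning strategy for $\forall$ is found, and otherwise continues forever.

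The correctness analysis mirrors that of Theorem \ref{secondtheorem}: at every reachable position $(\mathfrak{B},g,\#',\psi)$ the domain $B$ is finite, since only finitely many $Ix$-steps have been executed, so the number of successor positions is finite and thus the subtree consisting of plays in which a fixed strategy is followed is finitely branching. If such a strategy admitted plays of arbitrarily large length, K\"onig's Lemma would produce an infinite branch, contradicting the winning condition. Consequently each winning strategy has a uniform finite depth bound that $\mathrm{TM}$ eventually encounters, and the accept/reject/diverge behaviour of $\mathrm{TM}$ matches the semantic partition. The main obstacle to address is the isomorphism-invariance clause in the definition of a Turing classification: two encodings of the same $\mathfrak{A}$ under the fixed $<^{\tau}$ differ only in the linear order chosen on $A$, hence parse to isomorphic $\tau$-structures, and because $\models^+$ and $\models^-$ depend only on the isomorphism type, the verdict of $\mathrm{TM}$ agrees on both encodings, as required.
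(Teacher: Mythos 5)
Your proposal is correct and follows essentially the same route as the paper, whose own proof of this theorem is simply the remark that the argument is practically identical to that of Theorem \ref{secondtheorem}: you instantiate exactly that iterative-deepening search over the semantic game together with the K\"onig's Lemma bound on winning strategies. The extra details you supply (parsing and rejecting malformed encodings, disjointness via game duality, and isomorphism-invariance of the verdict under different orderings of $A$) are precisely the bookkeeping the paper leaves implicit.
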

\begin{proof}
The proof is practically identical to the proof of Theorem \ref{secondtheorem}.
\end{proof}
\section{Generalized Quantifiers and Oracles}\label{six}
The relationship between oracles and Turing machines 
is analogous to the relationship between generalized quantifiers and logic.
Oracles allow arbitrary jumps in computations
in a similary way in which generalized quantifiers allow
the assertion of arbitrary properties of relations.
In this section we briefly discuss extensions of the logic $\mathcal{L}$
with generalized quantifiers. For the sake of simplicity, we only consider
\emph{unary quantifiers of the width one}, i.e., quantifiers of the type $(1)$.

\newcommand{\mA}{\mathfrak{A}}
\newcommand{\mB}{\mathfrak{B}}
\newcommand{\mM}{\mathfrak{M}}
\newcommand{\mfo}{\models_{\mathrm{FO}}}

A \emph{unary generalized quantifier of the width one}
(cf. \cite{lindstrom}) is a class $\mathcal{C}$ of structures
$(A,B)$ such that the following conditions hold.
\begin{enumerate}
\item
$A\not=\emptyset$ and $B\subseteq A$.
\item
If $(A',B')\in\mathcal{C}$ and if there is an isomorphism $f:A'\rightarrow A''$
from $(A',B')$ to another structure $(A'',B'')$,
then we have $(A'',B'')\in\mathcal{C}$.
\end{enumerate} 
Below the word \emph{quantifier} always means a
unary generalized quantifier of the width one.
Let $Q$ be a quantifier.
%
%
%
%
%
%
%
Let $\mathfrak{A}$ be a model 
with the domain $A$.
We define $Q^{\mathfrak{A}}\, :=\, 
\{\ B\ |\ (A,B)\in Q\ \}.$
%
%
%
%
%
%
%
%
%
Extend the formula formation rules of first-order logic
such that if $\varphi$ is a formula and $x$ a variable,
then $\hat{Q}x\, \varphi$ is a formula.
The operator $\hat{Q}x$ binds the variable $x$,
so the set of free variables of $\hat{Q}x\, \varphi$
is obtained by removing $x$ from the set of free variables of $\varphi$.
The standard semantic clause for the formula $\hat{Q}x\, \varphi$ is as follows.
Let $\mathfrak{A}$ be a model that interprets the
non-logical symbols in $\varphi$. Let $f$ be an assignment
function that interprets the free variables in $\hat{Q}x\, \varphi$.  
Then $\mathfrak{A},f\models \hat{Q}x\, \varphi$ iff
$\{\, a\in A\ |\ \mathfrak{A},f[x\mapsto a]\models \varphi\ \}\
\in\ Q^{\mathfrak{A}}.$
We then discuss how generalized quantifiers can be incorporated into the logic $\mathcal{L}$.
This simply amounts to extending the game-theoretic semantics such that generalized
quantifiers are taken into account.  This is accomplished in the canonical way
described below.\hspace{0.4mm}\footnote{Somewhat surprisingly, 
the semantic game moves for generalized quantifiers we are about to
define do not seem to have been 
defined in the exact same way in the literature before.
However, the article \cite{kuusisto} provides a rather similar but not exactly
the same treatment.}
Assume we have reached a position $(\mathfrak{A},f,+,\varphi)$
in a semantic game. If $Q^{\mathfrak{A}} = \emptyset$,
the player $\exists$ loses the play of the game.
Otherwise the player $\exists$ chooses a
set $S\in Q^{\mathfrak{A}}$. The player $\forall$ then chooses either
a point $s\in S$ of a point $s'\in A\setminus S$. (Here $A$ is
of course the domain of $\mathfrak{A}$.)
Suppose first that $\forall$ chooses $s\in S$. Then the game continues from the
position $(\mathfrak{A},f[x\mapsto s],+,\varphi)$.
Suppose then that $\forall$ chooses $s'\in A\setminus S$. Then the game continues from the
position $(\mathfrak{A},f[x\mapsto s'],-,\varphi)$.
The intuition behind these moves is that $\exists$ first chooses the set $S$ of
\emph{exactly all} witnesses for $\varphi$, and this set  $S$ must be in $Q^{\mathfrak{A}}$.
Then $\forall$ either opposes the claim that $S$ contains \emph{only} witnesses of $S$ by choosing a
potential counterexample $s\in S$, or alternatively, $\forall$ opposes the claim that $S$
contains \emph{all} witnesses of $\varphi$ by choosing a potential further witness $s'\in A\setminus S$.
Assume then that we have reached a position $(\mathfrak{A},f,-,\varphi)$
in a semantic game. If $Q^{\mathfrak{A}} = \emptyset$,
the player $\forall$ loses the play of the game.
Otherwise the player $\forall$ chooses a
set $S\in Q^{\mathfrak{A}}$. The player $\exists$ then chooses either
a point $s\in S$ of a point $s'\in A\setminus S$. 
Suppose that $\exists$ chooses $s\in S$. Then the game continues from the
position $(\mathfrak{A},f[x\mapsto s],-,\varphi)$.
Suppose then that $\exists$ chooses $s'\in A\setminus S$.
Then the game continues from the
position $(\mathfrak{A},f[x\mapsto s'],+,\varphi)$.
It is straightforward to prove that these rules give a semantics
such that in restriction to formulae of first-order logic
extended with generalized quantifiers, the standard Tarski style
semantics and the game-theoretic semantics are equivalent.
For the sake of brevity, we shall not attempt to formulate
extensions of Theorems \ref{thirdtheorem}
and \ref{fourththeorem} that apply to
extensions of $\mathcal{L}$ with quantifiers and
Turing machines with corresponding oracles.
Instead, further investigations in this direction are
left for the future.
%

%
%

%
\section{Concluding remarks}
It is easy to see that various interesting operators can be added to $\mathcal{L}$ without
sacrificing Turing-complete- ness. For example, second-order quantifiers can
easily be added. There are only finitely many ways to interpret a quantified
second-order variable in a finite model, and therefore K\"{o}nig's lemma
can still be applied so that Theorems \ref{secondtheorem} and \ref{fourththeorem}
hold. Also, it is possible to add to $\mathcal{L}$ an operator that, say, adds $|\mathcal{P}(W)|$
fresh elements to the domain $W$, and then extends the interpretations of selected 
relation symbols and second-order variables non-deterministically to all of the new domain.
%
%
In the finite, this operator does not add anything to 
the expressivity of $\mathcal{L}$, but of course more
delicate features of the underlying logic change.
Connections between $\mathcal{L}$ and team semantics
ought to be investigated thoroughly. Both P and NP can
be characterized nicely by logics based on team semantics; NP is
captured by both dependence logic and
IF logic, and P is captured on ordered models by 
\emph{inclusion logic} (see \cite{hella}).
Further interesting complexity classes will probably be characterized in terms of
logics based on team semantics in the near future.
We conjecture that by attaching suitable operators to the atoms of $\mathcal{L}$ of the type $k\in\mathbb{N}$,
it should be possible to extend $\mathcal{L}$ such that
resulting logics accomodate typical logics based on team semantics as fragments 
in a natural way. The game-theoretic approaches to team semantics developed in 
\cite{bradfield, gra, kuusisto, mann, vaananen} provide some starting points for related investigations.
Let $R$ be a binary relation symbol.
Let $\mathcal{L}_0$ denote the fragment of $\mathcal{L}$
that extends first-order logic by operators that enable the
the manipulation of the relation $R$ (only), the insertion
of fresh points to the domain, and recursive looping.
We conjecture that on models whose vocabulary 
contains the binary relation symbol $R$, already $\mathcal{L}_0$ is
Turing-complete. Indeed, this does not seem to be difficult
to prove using suitable gadgets, but we  leave it as a
conjecture at this stage.
Finally, it would be interesting to classify fragments of $\mathcal{L}$
according to whether their (finite) satisfiability problem is 
decidable. This would nicely extend the research on decidability
of fragments of first-order logic.

\nocite{*}
\bibliographystyle{eptcs}
\bibliography{generic}
\end{document}